\documentclass[12pt]{article}

\usepackage{amsmath}
\usepackage{amsthm}
\usepackage{amsfonts}
\usepackage{amssymb}
\usepackage{amscd}
\usepackage{calc}
\usepackage{graphicx}
\usepackage{xypic}

\newtheorem{thm}{Theorem}[section]
\newtheorem{lemma}[thm]{Lemma}
\newtheorem{prop}[thm]{Proposition}
\newtheorem{cor}[thm]{Corollary}

\newcommand{\Nb}{\mathbb{N}}
\newcommand{\Zb}{\mathbb{Z}}

\newcommand{\OO}{\mathcal{O}}
\newcommand{\PP}{\mathcal{P}}

\newcommand{\BB}{\mathcal{B}}
\newcommand{\FF}{\mathcal{F}}

\newcommand{\Ac}{\mathcal{A}}

\newcommand{\Ker}{\text{Ker}}
\newcommand{\Imm}{\text{Im}}

\newcommand{\GL}{\text{GL}}

\newcommand{\AF}{\overline{\Ac_{\FF_1\cup \FF_2}}}
\newcommand{\BGL}{\BB_e\GL}

\parindent=0in
\title{Maximal Levi Subgroups Acting on the Euclidean Building of  $\GL_n(F)$}
\author{Jonathan Needleman}
\begin{document}
\maketitle
\begin{abstract}
In this paper we give a complete invariant of the action of $\GL_n(F)\times \GL_m(F)$ on the Euclidean building $\BGL_{n+m}(F)$, where $F$ is a non-archimedian field.  We then use this invariant to give a natural metric on the resulting quotient space.  In the special case of the torus acting on the tree $\BGL_2(F)$ this gives a method for calculating the distance of any vertex to any fixed apartment.
\end{abstract}
\section{Introduction}

\label{secBe}
To understand distance in the 1-skeleton of a building $\BB G$ associated to a reductive algebraic group  $G$, one may look at a stabilizer $K$ of a point, and then study the action of $K$ on $\BB G$.  When working over a non-archimedian field vertices correspond to maximal compact subgroups.  This analysis gives rise to information about $K\backslash G/K$, and  therefore the Hecke algebra \cite{She:1},\cite{She:2}.\\
\\
In this paper we specialize to $G=\GL_n(F)$ and are interested in the double cosets $L\backslash G/K$, where $L\cong \GL_{n_1}(F)\times \GL_{n_2}(F)$ is a maximal Levi subgroup of $G$.  The study of the action of $L$ on the building $\BB_e\GL_n(F)$ will lead to a description of distance from any vertex to a certain subbuilding stabilized by $L$.  In the case when $n=2$ and $L=T$ is a maximal split torus, our description gives a way of calculating the distance from a given point to a fixed apartment.\\
\\
We also give a combinatorial description of the quotient space $L\backslash \BB_e \GL_n(F)$ as follows.  Let $A^n=\{(\alpha_i)_{i=1}^n|\alpha_i\in \Nb, \alpha_I\geq \alpha_{i+1}\}$.  Then if $n_1\leq n_2$ there is an graph isometry between $L\backslash \BB_e \GL_n(F)$ and $A^{n_1}$ when $A^n$ is endowed with the following metric: $d(\alpha,\beta)=\text{max}_{i=1}^n |\alpha_i-\beta_i|$ where $\alpha,\beta\in A^n$.  This result shows that $1$-skeleton of the resulting quotient space only depends on $\text{min}(n_1,n_2)$. 
\\
This paper is broken up into two main sections.  The first gives a description of the building in terms of $\OO$-lattices and describes an invariant of the action of $L$ on this building.  The second section gives a geometric interpretation of this invariant, yielding a combinatorial description of the quotient space $L\backslash \BGL_n(F)$. 
\section{Orbits of Maximal Levi Factors on $\BGL(V)$}
\subsection{$\OO$-Lattices and $\BGL(V)$}
Throughout this paper let $F$ be a non-archimedian field.  We will denote the ring of integers in $F$ by $\OO$, and fix once an for all a uniformizer $\varpi$ of $\OO$.  Let the unique maximal prime ideal be denoted as $\PP=(\varpi)$, and the residue field $\OO/\PP$ of order $p^k=q$ will be denoted by $\mathfrak{k}$.  Let $\PP^k=(\varpi^k)$ for $k\in\Zb$.  Then $\log_\PP(\PP^k)=k$.  Let $V$ be a vector space defined over $F$.  We will describe the Euclidean building $\BB_e\GL(V)$ associated to $GL(V)$.  For more details see \cite{Br} or \cite{Ga}.  Let $\Lambda\subset V$ be a finitely generated free  $\OO$-module.  Denote by $[\Lambda]$ the homothety class of $\Lambda$, that is $[\Lambda]=\{a\Lambda|a\in F^\times\}$.  \\
\\
Homothety classes of lattices will form the vertices of $\BB_e\GL(V)$.  Two vertices $\lambda_1,\lambda_2\in \BB_e\GL(V)$ are incident if there are representatives $\Lambda_i\in \lambda_i$ so that $\varpi\Lambda_1\subset \Lambda_2 \subset \Lambda_1$, i.e. $\Lambda_2/\varpi\Lambda_1$ is a $\mathfrak{k}$-subspace of $\Lambda_1/\varpi\Lambda_1$.  The chambers in $\BB_e\GL(V)$ are collections of maximally incident vertices.  To put this more concretely, assume the dimension of $V$ is $n$.  Then a chamber is a collection of $n$ vertices $\lambda_0 \cdots\lambda_{n-1}$ with representatives $\Lambda_0 \cdots \Lambda_{n-1}$ satisfying $\varpi\Lambda_0\subsetneq \Lambda_1 \subsetneq \cdots \subsetneq \Lambda_{n-1}\subsetneq \Lambda_0$.  A wall of a chamber is any subset of $n-1$ vertices in the given chamber.  We will denote by $\BB_e\GL(V)^k$ the set of all facets of $\BB_e\GL(V)$ of dimension $k$.\\
\\
A frame $\FF$ in $V$ is a collection of lines $l_1,\ldots l_n\subset V$ which are linearly independent and span all of $V$.  We now describe certain subcomplexes of $\BB_e\GL(V)$.  Define $\Ac_\FF$ to be the subcomplex consisting of vertices $[\Lambda]$ of the following form:
\begin{equation}
\Lambda=\bigoplus_{i=1}^n \OO e_i
\end{equation}
where $e_i\in l_i\in \FF$.  $\Ac_\FF$ is then an apartment of $\BB_e\GL(V)$, and every apartment is uniquely determined by a frame in this way. \\
\\
The group $\GL(V)$ has a natural action of $\BB_e\GL(V)$, namely the one induced from the action of $\GL(V)$ on $V$.  This action preserves distance in the building. 
\subsection{$GL(W_1)\times GL(W_2)$ acting on $\BB_e(W_1\oplus W_2)$}\label{secW1W2}
Let $V$ be a vector space over $F$.  Fix a maximal Levi subgroup $L$ of $\GL(V)$.   Associated to $L$ are subspaces $W_1,W_2\subset V$ satisfying $V=W_1\oplus W_2$.  Then $L\cong \GL(W_1)\times \GL(W_2)$.  In this section we will describe the orbits of the action of  $GL(W_1)\times GL(W_2)$ on $\BB_e\GL(V)^0$ in terms of an invariant $Q$.  Additionally we will give a representative of each orbit.\\
\\
Let $p_i$ be the projection of $V$ onto $W_i$ with respect to our given decomposition.  We will use these maps to define invariants of the vertices and then show for our action that these invariants classify all orbits. \\
\\
Let $\Lambda$ be an $\OO$-lattice.  We make the following definitions for $i=1,2$:
\begin{eqnarray}
P_i(\Lambda)&=&\Imm(p_i|_{\Lambda})\\
K_i(\Lambda)&=&\Ker(p_{i'}|_{\Lambda})=\Lambda\cap W_i
\end{eqnarray}
Where $i'=(i\text{ mod } 2)+1$.\\
\\
These are a lattices in $W_i$.  

\begin{lemma}\label{K_in_P}
$K_i(\Lambda)\subset P_i(\Lambda)$
\end{lemma}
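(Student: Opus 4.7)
The plan is a direct unpacking of the definitions. Let $x \in K_i(\Lambda)$. By definition $K_i(\Lambda) = \Lambda \cap W_i$, so $x \in \Lambda$ and $x \in W_i$. Since $p_i$ is the projection of $V = W_1 \oplus W_2$ onto $W_i$, its restriction to $W_i$ is the identity, hence $p_i(x) = x$. Therefore $x = p_i(x) \in p_i(\Lambda) = P_i(\Lambda)$.

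There is no real obstacle here; the lemma is essentially a tautology once one observes that a projection fixes the subspace it projects onto. The only thing worth emphasizing in the write-up is that we are exploiting the symmetry of the direct-sum decomposition: $\Ker(p_{i'}) = W_i$ in $V$, so the intersection formulation $\Lambda \cap W_i$ for $K_i(\Lambda)$ is exactly what allows $p_i$ to act as the identity on it. I would write the proof in one or two sentences at most.
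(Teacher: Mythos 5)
Your proof is correct and is essentially identical to the paper's: both take $x \in \Lambda \cap W_i$, note $p_i(x) \in P_i(\Lambda)$ since $x \in \Lambda$, and conclude using $p_i(x) = x$ because $p_i$ restricts to the identity on $W_i$. Nothing further is needed.
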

\begin{proof}
If $v\in K_i(\Lambda)=\Lambda \cap W_i$, then $v\in \Lambda$, so $p_i(v)\in P_i(\Lambda)$.  But $p_i(v)=v$ since $v\in W_i$.
\end{proof}

Another basic lemma which will not be used immediately but will be useful later on is the following.

\begin{lemma}\label{Lam_eq}
Let $\Lambda,\Lambda'$ be $\OO$-latices, and assume that $\Lambda\subset \Lambda'$.  Furthermore, assume that $P_i(\Lambda)=P_i(\Lambda')$ and $K_i(\Lambda)=K_i(\Lambda')$.  Then $\Lambda=\Lambda'$.
\end{lemma}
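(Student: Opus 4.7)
The plan is to show the reverse inclusion $\Lambda' \subset \Lambda$ by taking an arbitrary element of $\Lambda'$ and correcting it, using first a projection-image hypothesis and then a kernel hypothesis, into an element of $\Lambda$. The given direction $\Lambda \subset \Lambda'$ provides that any element of $\Lambda$ we pick remains in $\Lambda'$, which lets us do the correction inside $\Lambda'$ without leaving it.

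More concretely, I would start with an arbitrary $v \in \Lambda'$ and look at $p_1(v) \in P_1(\Lambda')$. Since $P_1(\Lambda') = P_1(\Lambda)$, there exists some $\lambda \in \Lambda$ with $p_1(\lambda) = p_1(v)$. Then the difference $v - \lambda$ lies in $\Ker(p_1) = W_2$, and simultaneously $v - \lambda \in \Lambda'$ since $\lambda \in \Lambda \subset \Lambda'$. Therefore $v - \lambda \in \Lambda' \cap W_2 = K_2(\Lambda')$. Applying the hypothesis $K_2(\Lambda') = K_2(\Lambda)$ gives $v - \lambda \in K_2(\Lambda) \subset \Lambda$, and adding $\lambda \in \Lambda$ yields $v \in \Lambda$ as desired.

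This argument only uses one of the two image hypotheses and one of the two kernel hypotheses; by the symmetry of the roles of $i=1$ and $i=2$, either pair suffices. The whole argument is a standard short-exact-sequence / splitting observation: the projection $p_1$ restricted to $\Lambda'$ has image $P_1(\Lambda')$ and kernel $K_2(\Lambda')$, and matching both data with the sublattice $\Lambda$ forces equality.

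There is no real obstacle here; the only thing one must be careful about is not inadvertently requiring the preimage $\lambda$ of $p_1(v)$ to be unique. It is not unique, but any choice works because the correction step in $W_2$ is handled cleanly by the $K_2$ hypothesis. Once this is observed, the proof is just a two-line set-theoretic chase.
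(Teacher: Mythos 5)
Your argument is correct and is essentially the paper's own proof: both take an arbitrary element of $\Lambda'$, use one of the $P_i$ equalities to find an element of $\Lambda$ with the same projection, and then observe that the difference lies in the corresponding $K_{i'}$, which is contained in $\Lambda$. The only difference is the choice of which index plays which role, which is immaterial by symmetry, and your observation that only one image hypothesis and one kernel hypothesis are needed is accurate.
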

\begin{proof}
Let $v'\in\Lambda'$, we wish to show $v'\in \Lambda$.  Write $p_i(v')=w_i'$.  Because $w_2'\in P_2(\Lambda)$ there is a $w_1\in W_1$ so that $w_1+w_2'\in \Lambda$.  Then $w_1'-w_1\in \Lambda'$.  Hence $w_1'-w_1\in K_1(\Lambda')=K_1(\Lambda)$, and so $w_1'-w_1\in \Lambda$.  Therefore $(w_1'-w_1)+(w_1+w_2')=v'\in\Lambda$.
\end{proof}

By lemma \ref{K_in_P} we can define $Q_i(\Lambda)=P_i(\Lambda)/K_i(\Lambda)$.  This is a finite $\OO$-module.

\begin{prop}\label{theta_map}
$Q_1(\Lambda)\cong Q_2(\Lambda)$ as $\OO$-modules.  This isomorphism class will be denoted by $Q(\Lambda)$.
\end{prop}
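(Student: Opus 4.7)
The plan is to construct an explicit $\OO$-module isomorphism $\theta : Q_1(\Lambda) \to Q_2(\Lambda)$ by exploiting the fact that every element of $\Lambda$ is a sum of a $W_1$-part and a $W_2$-part linked by membership in $\Lambda$. First I would define an intermediate map $\widetilde{\theta} : P_1(\Lambda) \to Q_2(\Lambda)$ as follows: given $w_1 \in P_1(\Lambda)$, choose any $w_2 \in W_2$ such that $w_1 + w_2 \in \Lambda$ (which exists by definition of $P_1$), and set $\widetilde{\theta}(w_1)$ equal to the class of $w_2$ in $Q_2(\Lambda) = P_2(\Lambda)/K_2(\Lambda)$.

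The first step is well-definedness. If $w_1 + w_2$ and $w_1 + w_2'$ both lie in $\Lambda$, their difference $w_2 - w_2'$ lies in $\Lambda \cap W_2 = K_2(\Lambda)$, so the classes of $w_2$ and $w_2'$ in $Q_2(\Lambda)$ agree. The $\OO$-linearity follows because if $w_1 + w_2$ and $w_1' + w_2'$ are in $\Lambda$ then for any $a \in \OO$ we have $(aw_1 + w_1') + (aw_2 + w_2') \in \Lambda$, providing a valid lift for $aw_1 + w_1'$.

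Next I would identify the kernel. If $\widetilde{\theta}(w_1) = 0$, then some lift $w_2$ lies in $K_2(\Lambda) \subset \Lambda$, which forces $w_1 = (w_1 + w_2) - w_2 \in \Lambda \cap W_1 = K_1(\Lambda)$. Conversely, if $w_1 \in K_1(\Lambda)$ then $w_1 \in \Lambda$, so we may take $w_2 = 0$ as the lift and $\widetilde{\theta}(w_1) = 0$. Hence $\ker \widetilde{\theta} = K_1(\Lambda)$, and $\widetilde{\theta}$ descends to an injection $\theta : Q_1(\Lambda) \hookrightarrow Q_2(\Lambda)$.

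Finally, surjectivity: for any class in $Q_2(\Lambda)$ pick a representative $w_2 \in P_2(\Lambda)$, then by definition of $P_2$ there exists $w_1 \in W_1$ with $w_1 + w_2 \in \Lambda$; this $w_1$ lies in $P_1(\Lambda)$ and satisfies $\widetilde{\theta}(w_1) = [w_2]$. I do not foresee a real obstacle here: the entire argument is forced by the symmetric roles of $W_1$ and $W_2$ in the direct sum decomposition, and the only thing that requires attention is carefully bookkeeping the choice of lifts modulo $K_2(\Lambda)$ to ensure well-definedness.
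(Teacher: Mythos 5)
Your proof is correct and follows essentially the same route as the paper: both construct the map sending the class of $w_1$ to the class of its $W_2$-partner in $\Lambda$, with well-definedness coming from $w_2-w_2'\in\Lambda\cap W_2=K_2(\Lambda)$. The only cosmetic difference is that you verify bijectivity by computing the kernel and checking surjectivity, whereas the paper simply exhibits the inverse map by symmetry.
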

\begin{proof}
We make slight modifications to the proof found in \cite{Ne}.  Let $p_i':\Lambda\rightarrow Q_i(\Lambda)$ be the composition of $p_i$ with the natural projection map $\pi_i:P_i(\Lambda)\rightarrow Q_i(\Lambda)$.  We define a map so that $\forall v\in \Lambda$

\begin{equation}\label{thetamap}
\begin{array}{rll}
\Theta:Q_1(\Lambda)&\rightarrow & Q_2(\Lambda)\\
p_1'(v)&\mapsto& p_2'(v)
\end{array}
\end{equation}
 We will show that $\Theta$ is well defined, and is an isomorphism.\\
\\
Let $w_1+w_2,w_1'+w_2'\in \Lambda$ with $w_i,w_i'\in W_i$ and $\pi_1(w_1)=\pi_1(w_1')$.  Then $\pi_1(w_1-w_1')=0$, and there for $w_1-w_1'\in K_1(\Lambda)$.  Therefore $w_2-w_2'\in K_2(\Lambda)$ and $\pi_2(w_2)=\pi_2(w_2')$ showing $\Theta$ is well defined. It is an isomorphism, because the map $\theta$, defined by reversing the rolls of $1$ and $2$, is an inverse map.   
\end{proof}
We now show that $Q$ is a complete invariant of the action of $L$ on $\BB_e\GL(V)^0$.
\begin{thm}\label{Q_invar}
$\Lambda,\Lambda'$ be $\OO$-lattices.  Then $\Lambda$ and $\Lambda'$ are in the same $\GL(W_1)\times \GL(W_2)$ orbit if and only if $Q(\Lambda)=Q(\Lambda')$.
\end{thm}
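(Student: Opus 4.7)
The plan. The forward direction is immediate: if $\Lambda'=(g_1,g_2)\Lambda$ for some $(g_1,g_2)\in L$, then each $g_i$ commutes with $p_i$, so $P_i(\Lambda')=g_i P_i(\Lambda)$ and $K_i(\Lambda')=g_i K_i(\Lambda)$; passing to the quotient yields an $\OO$-linear isomorphism $Q_i(\Lambda)\cong Q_i(\Lambda')$, whence $Q(\Lambda)=Q(\Lambda')$.

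For the converse, the strategy is to put every $\Lambda$ into a normal form determined by $Q(\Lambda)$ together with a choice of adapted basis of each $W_i$. Using the structure theorem for finitely generated $\OO$-modules, write $Q(\Lambda)\cong\bigoplus_{j=1}^{k}\OO/\PP^{\alpha_j}$ with $\alpha_1\geq\cdots\geq\alpha_k>0$. By the elementary divisor theorem applied to the inclusions $K_i(\Lambda)\subset P_i(\Lambda)$, choose bases $e_1,\ldots,e_{n_1}$ of $P_1(\Lambda)$ and $f_1,\ldots,f_{n_2}$ of $P_2(\Lambda)$ such that $\{\varpi^{\alpha_j}e_j\}_{j\leq k}\cup\{e_j\}_{j>k}$ is a basis of $K_1(\Lambda)$, and analogously for the $f_j$ and $K_2(\Lambda)$.

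The key technical step is to further adjust the $f_j$ so that, while remaining adapted to $K_2(\Lambda)$, they also satisfy $\pi_2(f_j)=\Theta(\pi_1(e_j))$ for every $j\leq k$, where $\Theta$ is the isomorphism of Proposition \ref{theta_map}. Such an adjustment exists once one knows that the natural restriction map $\text{Aut}_{\OO}(P_2(\Lambda),K_2(\Lambda))\to\text{Aut}_{\OO}(Q_2(\Lambda))$ is surjective, which in turn reduces (via the block structure of the source) to surjectivity of $\GL_k(\OO)\to\GL_k(\mathfrak{k})$. Once the bases are aligned in this way, the very definition of $\Theta$ in the proof of Proposition \ref{theta_map} forces $e_j+f_j\in\Lambda$ for each $j\leq k$.

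Now let $\Lambda_0$ be the sublattice of $\Lambda$ generated by $\{e_j+f_j\}_{j\leq k}\cup\{\varpi^{\alpha_j}e_j\}_{j\leq k}\cup\{e_j\}_{j>k}\cup\{f_j\}_{j>k}$. A direct inspection of generators (noting that $\varpi^{\alpha_j}f_j=\varpi^{\alpha_j}(e_j+f_j)-\varpi^{\alpha_j}e_j\in\Lambda_0$) yields $P_i(\Lambda_0)=P_i(\Lambda)$ and $K_i(\Lambda_0)=K_i(\Lambda)$ for $i=1,2$, so Lemma \ref{Lam_eq} gives $\Lambda_0=\Lambda$. This exhibits $\Lambda$ in a normal form depending only on $\alpha$ and the chosen bases of $W_1$ and $W_2$. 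Given a second lattice $\Lambda'$ with $Q(\Lambda')=Q(\Lambda)$, carry out the same construction to obtain bases $\{e_j'\},\{f_j'\}$ presenting $\Lambda'$ identically, and define $g_1\in\GL(W_1)$, $g_2\in\GL(W_2)$ by $g_1(e_j)=e_j'$ and $g_2(f_j)=f_j'$. Then $(g_1,g_2)\in L$ maps the generating set of $\Lambda$ onto that of $\Lambda'$, so $(g_1,g_2)\Lambda=\Lambda'$. The main anticipated obstacle is the basis-alignment step for the $f_j$; every other ingredient is a routine application of the structure theory of finitely generated $\OO$-modules together with Lemma \ref{Lam_eq}.
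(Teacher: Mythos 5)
Your proof is correct, and it follows the same underlying strategy as the paper's --- elementary divisors align the pairs $K_i(\Lambda)\subset P_i(\Lambda)$, the isomorphism $\Theta$ is the only remaining datum, and the crux in both arguments is lifting an automorphism of $Q_2(\Lambda)$ to an element of $\text{stab}(P_2(\Lambda))\cap\text{stab}(K_2(\Lambda))$ --- but your execution is genuinely different in one respect: where the paper reconstructs $\Lambda$ abstractly as $\Lambda_\Theta=\{w_1+w_2\mid \Theta(\pi_1(w_1))=\pi_2(w_2)\}$ and then matches $\Theta$ with $\Theta''$, you exhibit an explicit generating set and invoke Lemma \ref{Lam_eq} on the inclusion $\Lambda_0\subset\Lambda$. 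This buys you something the paper gets only later: your normal form is exactly the representative $\Lambda^\alpha$ of equation \eqref{Lamal}, so Corollary \ref{2dim_reps} and Proposition \ref{ndim_reps} fall out of your argument for free, whereas the paper's route is shorter if one is willing to cite the classification of pairs $(P_i,K_i)$ from \cite{Ne} as a black box. Two small points to tighten. First, your reduction of the lifting step to ``surjectivity of $\GL_k(\OO)\to\GL_k(\mathfrak{k})$'' is imprecise: $\text{Aut}_\OO\bigl(\bigoplus_j\OO/\PP^{\alpha_j}\bigr)$ is not $\GL_k(\mathfrak{k})$ when the $\alpha_j$ exceed $1$. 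The correct statement is that every $\OO$-linear endomorphism of $Q_2(\Lambda)$ lifts to an endomorphism of $P_2(\Lambda)$ preserving $K_2(\Lambda)$ (the matrix condition $\varpi^{\alpha_i-\alpha_j}\mid c_{ij}$ for $\alpha_i>\alpha_j$ is the same on both sides), and a lift of an automorphism is automatically invertible because its reduction modulo $\PP$ is. Second, ``the definition of $\Theta$ forces $e_j+f_j\in\Lambda$'' deserves one line: choose $v=e_j+w_2\in\Lambda$ with $p_1(v)=e_j$; then $\pi_2(w_2)=\Theta(\pi_1(e_j))=\pi_2(f_j)$, so $w_2-f_j\in K_2(\Lambda)\subset\Lambda$ and $e_j+f_j=v-(w_2-f_j)\in\Lambda$. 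With those clarifications the argument is complete.
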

\begin{proof}
$Q(\Lambda)$ is a $\GL(W_1)\times\GL(W_2)$-invariant since each factor of $\GL(W_i)$ commutes with the projection map $p_i$.  We must show that if $Q(\Lambda)=Q(\Lambda')$ then $\exists g\in \GL(W_1)\times \GL(W_2)$ so that $\Lambda=g\Lambda'$.\\
\\
By \cite{Ne} we know $\exists g_1\in GL(W_1)$ and $g_2\in GL(W_2)$ so that $g_i P_i(\Lambda')=P_i(\Lambda)$ and $g_i K_i(\Lambda')=K_i(\Lambda)$.  So we may replace $\Lambda'$ with $\Lambda''=(g_1,g_2)\Lambda'$.  Let $\Theta$ be the map from \ref{theta_map} associated to $\Lambda$, and $\Theta''$ associated to $\Lambda''$.\\
\\  
We claim $\Lambda=\Lambda''$ if and only if $\Theta=\Theta''$.  To prove this we show that one can reconstruct $\Lambda$ from $\Theta$ (which implicitly encodes $Q_i(\Lambda)$ as the domain and range of the map), by taking 
\begin{equation}
\Lambda_\Theta =\{w_1+w_2|w_i\in P_i(\Lambda)\text{ and } \Theta(\pi_1(w_1))=\pi_2(w_2)\}
\end{equation}
First we show $\Lambda\subset \Lambda_\Theta$.  Let $w=w_1+w_2\in\Lambda$, then by definition of $\Theta$ we have $\Theta (\pi_1(w_1)=\Theta(\pi_2(w_2))$.  And so $v\in \Lambda_\Theta$.  We now show $\Lambda_\Theta\subset \Lambda$.  Let $w_1+w_2\in \Lambda_\Theta$.  Then $w_1\in P_1(\Lambda)$ so there is a $w_2'\in P_2(\Lambda)$ so that $w_1+w_2'\in \Lambda\subset \Lambda_\Theta$.  Then $0+(w_2-w_2')\in \Lambda_\Theta$.  So $\pi_2(w_2-w_2')=0$ which implies $w_2-w_2'\in K_2(\Lambda)\subset \Lambda$.  Hence $w_1+w_2=(w_1+w_2')+(w_2-w_2')\in \Lambda$ as desired.\\
\\
To complete the theorem, we will show there is a $g\in \text{stab}(P_2(\Lambda))\cap \text{stab}(K_2(\Lambda))$  which takes $\Theta''$ to $\Theta$.  There is a $\overline{h}\in \GL(P_2(\Lambda)/K_2(\Lambda))$ so that $(1,\overline{h})\Theta''=\Theta$.  Let $h$ be a pull back of $\overline{h}$ to $h\in \text{stab}(P_2(\Lambda))\cap \text{stab}(K_2(\Lambda))\in \GL(W_2)$.  Then $(1,h)\Lambda''=\Lambda$.
\end{proof}

Now let $[\Lambda]\in\BB_e\GL(V)^0$, and $c\in F^\times$.  Since $Q(\Lambda)=Q(c\Lambda)$ we will abuse notation and write $Q([\Lambda])=Q(\Lambda)$.
\begin{cor}
$Q([\Lambda])$ is a complete invariant of the action of $GL(W_1)\times GL(W_2)$ on the space of vertices in $\BB_e(V)^0$.
\end{cor}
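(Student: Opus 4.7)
The plan is to deduce this corollary directly from Theorem \ref{Q_invar} by checking that the invariant $Q$ descends from lattices to homothety classes, so that the set-theoretic quotient $L \backslash \BB_e\GL(V)^0$ really is classified by $Q$.

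The first step is well-definedness, which is what the remark just before the corollary asserts but does not prove. For any $c \in F^\times$ I would observe that $p_i$ is $F$-linear, so $P_i(c\Lambda) = c P_i(\Lambda)$ and $K_i(c\Lambda) = (c\Lambda) \cap W_i = c(\Lambda \cap W_i) = c K_i(\Lambda)$. Multiplication by $c$ is then an $\OO$-module isomorphism $P_i(\Lambda)/K_i(\Lambda) \xrightarrow{\sim} P_i(c\Lambda)/K_i(c\Lambda)$, so $Q_i(\Lambda) \cong Q_i(c\Lambda)$ as $\OO$-modules. Hence $Q(\Lambda)$ really only depends on the homothety class $[\Lambda]$, justifying the notation $Q([\Lambda])$.

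The second step is to lift the orbit-equivalence statement. If $[\Lambda]$ and $[\Lambda']$ lie in the same $L$-orbit, then there exist $g \in L$ and $c \in F^\times$ with $g\Lambda = c\Lambda'$. Combining the invariance of $Q$ under $L$ (the first paragraph of the proof of Theorem \ref{Q_invar}) with the homothety-invariance just established gives $Q(\Lambda) = Q(g\Lambda) = Q(c\Lambda') = Q(\Lambda')$, so $Q([\Lambda]) = Q([\Lambda'])$. Conversely, if $Q([\Lambda]) = Q([\Lambda'])$, then in particular $Q(\Lambda) = Q(\Lambda')$ as $\OO$-modules, and Theorem \ref{Q_invar} produces $g \in L$ with $g\Lambda = \Lambda'$; applying the homothety class functor yields $g[\Lambda] = [\Lambda']$.

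There is no real obstacle here: the content of the corollary is the theorem, and the only genuinely new ingredient is the formal check that scaling a lattice by an element of $F^\times$ does not change the isomorphism class of $Q_i(\Lambda)$. The mildest point to be careful about is not confusing the two meanings of the equality $Q([\Lambda]) = Q([\Lambda'])$ (equality in the set of $\OO$-module isomorphism classes) with set-theoretic equality of the modules themselves, but both directions of the argument above go through with either reading.
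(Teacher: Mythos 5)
Your proposal is correct and matches the paper's (essentially unwritten) argument: the paper simply asserts $Q(\Lambda)=Q(c\Lambda)$ in the remark preceding the corollary and lets the corollary follow from Theorem \ref{Q_invar}. You have merely filled in the routine verification that $P_i$, $K_i$, and hence $Q_i$ scale compatibly under homothety, which is exactly the intended reasoning.
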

\subsection{Orbit Representatives}
We now give a set representatives of each orbit.  We first do this in the case when $V$ is 2 dimensional, and then use this case to determine representatives for higher dimensions.
\subsubsection{dim$(V)=2$}
Let $V$ be a two dimensional vector space over $F$, with decomposition $V=W_1\oplus W_2$.  Assume that $W_i$ is spanned by the vector $e_i$.  We then define the following class of lattices:
\begin{equation}\label{Lamk}
\Lambda^k=\text{span}_\OO <e_1,\varpi^{-k}e_1+e_2>
\end{equation}
\begin{prop}
$Q([\Lambda^k])\cong \OO/\PP^k$
\end{prop}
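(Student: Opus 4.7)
The plan is to compute the invariant $Q(\Lambda^k)$ by reading off the four lattices $P_1(\Lambda^k)$, $P_2(\Lambda^k)$, $K_1(\Lambda^k)$, $K_2(\Lambda^k)$ directly from the definitions in the previous subsection. Because $\Lambda^k$ is given by two explicit generators, each of these will be a concrete rank-one $\OO$-lattice in $W_1$ or $W_2$, and then the quotient $Q_i(\Lambda^k)=P_i(\Lambda^k)/K_i(\Lambda^k)$ can be written down on inspection. By Proposition \ref{theta_map} it suffices to handle one of the two indices, but I would verify both as a sanity check and to illustrate how the isomorphism $\Theta$ manifests in this example.

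First I would parametrize a generic element $v\in\Lambda^k$ as
\begin{equation*}
v = a e_1 + b(\varpi^{-k}e_1 + e_2) = (a + b\varpi^{-k})e_1 + b e_2, \qquad a,b\in\OO.
\end{equation*}
Applying $p_1$ shows that the first coordinates range over $\OO + \varpi^{-k}\OO = \PP^{-k}$, so $P_1(\Lambda^k) = \PP^{-k} e_1$; applying $p_2$ gives $P_2(\Lambda^k) = \OO e_2$. For the kernels, the condition $v\in W_1$ forces $b=0$, yielding $K_1(\Lambda^k)=\OO e_1$; while $v\in W_2$ forces $a = -b\varpi^{-k}$, and combined with $a\in\OO$ this requires $b\in\PP^k$, yielding $K_2(\Lambda^k)=\PP^k e_2$.

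Taking the two quotients then gives $Q_1(\Lambda^k) = \PP^{-k} e_1 / \OO e_1 \cong \OO/\PP^k$ and $Q_2(\Lambda^k) = \OO e_2 / \PP^k e_2 \cong \OO/\PP^k$, in agreement with Proposition \ref{theta_map}. No step is a genuine obstacle: the content of the proposition is simply that the family $\Lambda^k$ is concrete enough that the invariant can be read off by hand, thereby confirming that these lattices will serve as orbit representatives realizing every isomorphism class $\OO/\PP^k$ (with $k=0$ recovering the ``split'' vertex $[\OO e_1 \oplus \OO e_2]$, on which $p_1$ and $p_2$ restrict to isomorphisms onto their images).
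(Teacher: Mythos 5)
Your proof is correct and follows essentially the same route as the paper: both compute $P_1(\Lambda^k)=\PP^{-k}e_1$ and $K_1(\Lambda^k)=\OO e_1$ directly from the generators and conclude $Q(\Lambda^k)\cong \PP^{-k}/\OO\cong \OO/\PP^k$. Your additional computation of $P_2$ and $K_2$ is a harmless redundancy (the paper relies on Proposition \ref{theta_map} and checks only $i=1$), but it is a fine consistency check.
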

\begin{proof}
$P_1(\Lambda^k)=<\varpi^{-k}e_1>$ and $K_1(\Lambda^k)=<e_1>$.  Therefore $Q(\Lambda)\cong \PP^{-k}/\OO\cong \OO/\PP^k$.
\end{proof}
\begin{cor}\label{2dim_reps}
$\{[\Lambda^k]\}_{k=0}^\infty$ is a complete set of representatives for the action of $\GL(W_1)\times \GL(W_2)$ on $\BB_e\GL(V)^0$
\end{cor}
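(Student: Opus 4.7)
The plan is to combine Theorem \ref{Q_invar} (which tells us $Q$ is a complete orbit invariant) with Proposition 2.6 (which computes $Q([\Lambda^k]) \cong \OO/\PP^k$). Thus the corollary reduces to two things: showing the $[\Lambda^k]$ are pairwise inequivalent, and showing that every orbit is realized by some $[\Lambda^k]$.

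For distinctness, I observe that $\OO/\PP^k$ has cardinality $q^k$, so the modules $\OO/\PP^k$ for $k=0,1,2,\ldots$ are pairwise non-isomorphic. Combining with Proposition 2.6 and Theorem \ref{Q_invar}, the orbits containing $[\Lambda^k]$ are pairwise distinct.

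For surjectivity (every orbit is represented), I must show that for every lattice $\Lambda\subset V$, the invariant $Q([\Lambda])$ is isomorphic to $\OO/\PP^k$ for some $k\geq 0$. This is where the hypothesis $\dim V = 2$ enters: since $V = W_1 \oplus W_2$ with each $W_i$ one-dimensional, both $P_1(\Lambda)$ and $K_1(\Lambda)$ are $\OO$-lattices in the one-dimensional $F$-vector space $W_1$, hence cyclic $\OO$-modules. Writing $P_1(\Lambda) = \OO \cdot f$ and using $K_1(\Lambda) \subset P_1(\Lambda)$ from Lemma \ref{K_in_P}, we get $K_1(\Lambda) = \PP^k \cdot f$ for some $k \geq 0$. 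Therefore $Q_1(\Lambda) = P_1(\Lambda)/K_1(\Lambda) \cong \OO/\PP^k$, and by Theorem \ref{Q_invar} the lattice $[\Lambda]$ lies in the same orbit as $[\Lambda^k]$.

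There isn't a serious obstacle here; the result is essentially a bookkeeping consequence of the preceding machinery. The only subtle point to make explicit is that one-dimensionality of $W_1$ forces $Q_1(\Lambda)$ to be a cyclic $\OO$-module, which is exactly what restricts the invariant to the list $\{\OO/\PP^k\}_{k\geq 0}$ exhibited by the $\Lambda^k$.
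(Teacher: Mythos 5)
Your proposal is correct and follows the same route as the paper: compute $Q([\Lambda^k])\cong \OO/\PP^k$, note that every orbit has $Q\cong\OO/\PP^k$ for some $k$, and invoke Theorem \ref{Q_invar}. You actually supply two details the paper leaves implicit --- the pairwise non-isomorphism of the $\OO/\PP^k$ and the reason (one-dimensionality of $W_1$, so $P_1(\Lambda)$ and $K_1(\Lambda)$ are lattices in a line) why $Q_1(\Lambda)$ must be cyclic --- which is a welcome strengthening rather than a deviation.
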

\begin{proof}
Let $[\Lambda]\in \BB_e\GL(V)^0$.  Then $Q([\Lambda])\cong \OO/\PP^k$ for some $k\in \Nb$.  By theorem \ref{Q_invar} $[\Lambda]$ is in the orbit of $\Lambda^k$.
\end{proof}
\subsubsection{General $V$}
We now describe representatives when $V$ is $n$ dimensional.  We may assume that $\text{dim}W_i=n_i$ and $n_1\leq n_2$.  Choose a basis $\{e_1,\ldots, e_{n_1}\}$ of $W_1$ and $\{f_1,\ldots,f_{n_2}\}$, and let $Y_i=\text{span}_F(e_i,f_i)$, for $1\leq i \leq n_1$.  Let $\alpha=(\alpha_i) \in \Nb^{n_1}$.  Let $[\Lambda^{\alpha_i}]\in \BB_e\GL(Y_i)$ defined as in equation \ref{Lamk} with respect to the basis $\{e_i,f_i\}$.  This allows us to define the following class of lattices:
\begin{equation}\label{Lamal}
\Lambda^\alpha =\bigoplus_{i=1}^{n_1} \Lambda^{\alpha_i}\bigoplus_{i=n_1+1}^{n_2} \OO f_i
\end{equation}
\begin{prop}\label{ndim_reps}
Let $A^n=\{\alpha=(\alpha_i)\in \Nb^n|  \alpha_i \geq \alpha_{i+1}\}$.  Then $[\Lambda^\alpha]_{\alpha\in A^{n_1}}$ is a complete set of representatives of the orbits of $\GL(W_1)\times \GL(W_2)$ acting on $\BB_e\GL(V)^0$.
\end{prop}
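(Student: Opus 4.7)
The plan is to apply Theorem \ref{Q_invar}, which says orbits are classified by the isomorphism class $Q([\Lambda])$. So the proposition reduces to two claims: (i) as $\alpha$ ranges over $A^{n_1}$, the invariants $Q([\Lambda^\alpha])$ realize every isomorphism class that can appear as $Q([\Lambda])$ for some $[\Lambda] \in \BB_e\GL(V)^0$; and (ii) distinct $\alpha,\alpha' \in A^{n_1}$ yield non-isomorphic $Q$'s.

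First I would compute $Q([\Lambda^\alpha])$ directly from the definition. Because $\Lambda^\alpha$ is an $\OO$-module direct sum of the $\Lambda^{\alpha_i} \subset Y_i$ together with the $\OO f_i$ for $n_1 < i \leq n_2$, and because each summand lies in a $W_1\oplus W_2$-respecting subspace, the projections $P_1,K_1$ split over the summands. The extra factors $\OO f_i$ lie entirely in $W_2$ and so contribute $0$ to $P_1$ and $K_1$. On the $Y_i$ piece one reads off (as in the two-dimensional calculation) that $P_1(\Lambda^{\alpha_i}) = \OO \varpi^{-\alpha_i} e_i$ and $K_1(\Lambda^{\alpha_i}) = \OO e_i$. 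Summing up,
\begin{equation}
Q_1([\Lambda^\alpha]) \;\cong\; \bigoplus_{i=1}^{n_1} \OO/\PP^{\alpha_i}.
\end{equation}

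For claim (ii), since $\alpha_i \geq \alpha_{i+1}$, the sequence $\alpha$ is precisely the invariant-factor decomposition of the finite $\OO$-module $Q([\Lambda^\alpha])$, and by the structure theorem for finitely generated modules over the PID $\OO$ this decomposition is unique; hence distinct $\alpha \in A^{n_1}$ give non-isomorphic invariants. For claim (i), let $[\Lambda]$ be arbitrary. Then $Q_1(\Lambda) = P_1(\Lambda)/K_1(\Lambda)$ is a quotient of $P_1(\Lambda)$, which is a lattice in $W_1$ and hence has $\OO$-rank at most $n_1$. Consequently $Q_1(\Lambda)$ is a finitely generated torsion $\OO$-module generated by at most $n_1$ elements, so by the structure theorem $Q_1(\Lambda) \cong \bigoplus_{i=1}^{n_1} \OO/\PP^{\alpha_i}$ for some $\alpha \in A^{n_1}$ (allowing trailing $\alpha_i = 0$ when fewer generators suffice). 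By the computation above $Q([\Lambda]) = Q([\Lambda^\alpha])$, and Theorem \ref{Q_invar} then places $[\Lambda]$ in the orbit of $[\Lambda^\alpha]$.

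There is no real obstacle here; the only thing one must be careful about is the bound on the number of elementary divisors, where it is essential that we order $n_1 \leq n_2$ so that $Q$ is a quotient of the smaller-rank lattice $P_1(\Lambda)$. This is precisely why the index set is $A^{n_1}$ rather than $A^{n_2}$, and it is also the source of the dependence on $\min(n_1,n_2)$ advertised in the introduction.
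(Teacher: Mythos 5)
Your proof is correct and takes essentially the same route as the paper's: both reduce to Theorem \ref{Q_invar} after establishing that $Q_1(\Lambda)\cong\bigoplus_{i=1}^{n_1}\OO/\PP^{\alpha_i}$ for some $\alpha\in A^{n_1}$. The paper simply cites \cite{Ne} for this structure result, whereas you derive it directly from the structure theorem for finitely generated torsion modules over the PID $\OO$ via the rank bound on $P_1(\Lambda)$, and you also make explicit the irredundancy (distinct $\alpha$ give distinct $Q$), which the paper leaves implicit.
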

\begin{proof}
By \cite{Ne} $Q_1([\Lambda])\cong \bigoplus_{i=1}^{n_1} \OO/\PP^{\alpha_i}$ where $\alpha_i\in \Nb$.  We may assume $\alpha_i\geq \alpha_{i+1}$.  Then by theorem \ref{Q_invar} $[\Lambda]$ is in the same orbit as $[\Lambda^\alpha]$.

\end{proof}

\section{Geometric interpretation of $Q$}
\subsection{Distance Between Orbits}
The main result of section \ref{secW1W2} gives an invariant $Q$ of the action of $L=\GL(W_1)\times \GL(W_2)$ acting on $\BB_e\GL(W_1\oplus W_2)^0$.  In this section we give a geometric interpretation of this invariant in terms of a distance between orbits.\\
\\
By proposition \ref{ndim_reps} we may identify the space of orbits $L\backslash  \BB_e\GL(V)$ with $A_n$.  We define a function called the orbital distance as follows:
\begin{equation}
\begin{array}{cll}
d_O:A^n\times A^n &\rightarrow & \Nb\\
(\alpha,\beta) & \hookrightarrow & {\text{{\large max}} \atop \tiny{i=1\text{ to }n}}    (|\alpha_i-\beta_i|) 
\end{array}
\end{equation}
The main result of this section is that the name ``orbital distance'' is justified.  That is $d_O$ is actually the minimum distance between to orbits as measured in the $1$-skeleton of the building $\BB_e\GL(V)$.\\
\\
For simplicity if $[\Lambda]\in\BB_e(V)$ then let $L[\Lambda]$ denote the orbit of $[\Lambda]$ under $L$.

\begin{prop}\label{dis<1}
Let $[\Lambda_1],[\Lambda_2]\in \BB_e\GL(V)$ be incident, then $d_O(L[\Lambda_1],L[\Lambda_2])\leq 1$.
\end{prop}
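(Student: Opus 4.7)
The plan is to unpack the incidence $\varpi\Lambda_1\subseteq\Lambda_2\subseteq\Lambda_1$ into containments among the building blocks $P_i,K_i$ used to define $Q$, construct two natural $\OO$-linear maps between $Q(\Lambda_1)$ and $Q(\Lambda_2)$ whose compositions are multiplication by $\varpi$, and then extract the bound on the partitions via a standard subquotient inequality for elementary divisors.

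First, pick representatives satisfying $\varpi\Lambda_1\subseteq\Lambda_2\subseteq\Lambda_1$. Applying the $\OO$-linear projection $p_i$ to this chain yields $\varpi P_i(\Lambda_1)\subseteq P_i(\Lambda_2)\subseteq P_i(\Lambda_1)$, and intersecting the chain with $W_i$ (using $\varpi\Lambda_1\cap W_i=\varpi(\Lambda_1\cap W_i)$) yields $\varpi K_i(\Lambda_1)\subseteq K_i(\Lambda_2)\subseteq K_i(\Lambda_1)$. Fix any $i$ (the choice is immaterial by Proposition \ref{theta_map}) and use these containments to define two well-defined $\OO$-linear maps
\begin{equation*}
\phi:Q_i(\Lambda_1)\to Q_i(\Lambda_2),\quad p+K_i(\Lambda_1)\mapsto \varpi p+K_i(\Lambda_2),
\end{equation*}
\begin{equation*}
\psi:Q_i(\Lambda_2)\to Q_i(\Lambda_1),\quad p+K_i(\Lambda_2)\mapsto p+K_i(\Lambda_1),
\end{equation*}
satisfying $\psi\circ\phi=\varpi\cdot\text{id}_{Q_i(\Lambda_1)}$ and $\phi\circ\psi=\varpi\cdot\text{id}_{Q_i(\Lambda_2)}$ by direct computation.

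Now write the orbit invariants in partition form $Q(\Lambda_1)\cong\bigoplus_j\OO/\PP^{\alpha_j}$ and $Q(\Lambda_2)\cong\bigoplus_j\OO/\PP^{\beta_j}$. The relation $\psi\circ\phi=\varpi$ exhibits $\varpi Q_i(\Lambda_1)$, whose elementary divisors are $\max(\alpha_j-1,0)$, as the image under $\psi$ of the submodule $\phi(Q_i(\Lambda_1))\subseteq Q_i(\Lambda_2)$, and hence as a subquotient of $Q_i(\Lambda_2)$. By the standard fact that a subquotient of a finite $\OO$-module has componentwise-smaller invariant factors, $\max(\alpha_j-1,0)\leq\beta_j$, so $\alpha_j\leq\beta_j+1$. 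Running the same argument with the roles reversed, using the representative $\varpi\Lambda_1$ of $[\Lambda_1]$ and the resulting incidence chain $\varpi\Lambda_2\subseteq\varpi\Lambda_1\subseteq\Lambda_2$, produces the symmetric inequality $\beta_j\leq\alpha_j+1$. Therefore $d_O(L[\Lambda_1],L[\Lambda_2])=\max_j|\alpha_j-\beta_j|\leq 1$.

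The only nonroutine ingredient is the partition inequality for subquotients of finite $\OO$-modules, which follows from the observation that $\dim_{\mathfrak{k}}(\varpi^{k-1}M/\varpi^k M)$ weakly decreases under quotients, with the submodule case reduced to this via Pontryagin duality $\Hom_\OO(-,F/\OO)$ (which preserves partitions and interchanges inclusions with surjections). Everything else is setup—verifying the containments and the two compositional relations—so the real work is simply setting up the maps $\phi$ and $\psi$ and recognizing $\varpi Q_i(\Lambda_1)$ as a subquotient of $Q_i(\Lambda_2)$.
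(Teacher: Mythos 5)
Your proof is correct, and it takes a genuinely different route from the paper's. The paper also begins by deriving the containments $\varpi P_i(\Lambda_1)\subseteq P_i(\Lambda_2)\subseteq P_i(\Lambda_1)$ and $\varpi K_i(\Lambda_1)\subseteq K_i(\Lambda_2)\subseteq K_i(\Lambda_1)$, but from there it proceeds by a case analysis on where $P_1(\Lambda_2)$ and $K_1(\Lambda_2)$ can sit between these bounds: it works out two ``extreme cases'' ($P$ unchanged with $K$ shrunk to $\varpi K$, and $P$ shrunk to $\varpi P$ with $K$ replaced by $K\cap\varpi P$) and asserts that the intermediate configurations interpolate, giving $|\alpha_i-\beta_i|\leq 1$. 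Your argument replaces that interpolation step with structure: the two maps $\phi,\psi$ with $\psi\circ\phi=\phi\circ\psi=\varpi$ realize $\varpi Q_i(\Lambda_1)$ as a subquotient of $Q_i(\Lambda_2)$ (and, after passing to the representative $\varpi\Lambda_1$ and the chain $\varpi\Lambda_2\subseteq\varpi\Lambda_1\subseteq\Lambda_2$, symmetrically), and the componentwise monotonicity of invariant factors under subquotients does the rest. What your approach buys is completeness and a reusable lemma: the paper's ``no matter what $P_1(\Lambda_2)$ and $K_1(\Lambda_2)$ are'' conclusion is not actually argued for the intermediate cases, whereas your subquotient inequality (correctly reduced to the weak decrease of $\dim_{\mathfrak k}(\varpi^{k-1}M/\varpi^k M)$ under quotients plus duality for submodules) covers all cases uniformly. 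What the paper's approach buys is concreteness — it exhibits which incident moves change $\alpha$ and how, which foreshadows the explicit sublattices $M^{\alpha_i},N^{\alpha_i}$ used in Lemma \ref{dis_dec}. The only point worth making explicit in your write-up is the indexing convention in the inequality $\max(\alpha_j-1,0)\leq\beta_j$: both sequences must be taken in decreasing order so that the $j$-th invariant factor of the subquotient is compared with the $j$-th invariant factor of $Q_i(\Lambda_2)$, which is consistent with the identification of orbits with $A^{n_1}$ in Proposition \ref{ndim_reps}.
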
 
\begin{proof}
Let $[\Lambda_1],[\Lambda_2]$ be two incident vertices with $\varpi\Lambda_1\subset \Lambda_2\subset \Lambda_1$.   Let $L[\Lambda_1]$ be identified with $\alpha\in A^{n_1}$ and $L[\Lambda_2]$ with $\beta\in A^{n_1}$.  We have 
\begin{eqnarray}
\varpi P_i(\Lambda_1)\subset P_i(\Lambda_2)\subset P_i(\Lambda_1)\\
\varpi K_i(\Lambda_1)\subset K_i(\Lambda_2)\subset K_i(\Lambda_1)
\end{eqnarray}
There are two extreme cases.  First $P_1(\Lambda_2)=P_1(\Lambda_1)$ and $K_1 (\Lambda_2)=\varpi K_1(\Lambda_1)$.  In this case $\alpha_i=\beta_i+1$ for all $i$.\\
In the second case $P_1(\Lambda_2)=\varpi P_1(\Lambda_1)$, and $K_1(\Lambda_2)=K_1(\Lambda_1)\cap \varpi P_1(\Lambda_1)\supset \varpi K_1(\Lambda_1)$.  In this case $\alpha_i=\beta_i-1$ or $\alpha_i=\beta_i$.\\
The above argument shows that no mater what $P_1(\Lambda_2)$, and $K_1(\Lambda_2)$ are we have $|\alpha_i-\beta_i|\leq 1$ as desired.
\end{proof}
Proposition shows that if two incident vertices are in different orbits, then their $L$-orbits have orbital distance 1.  To show $d_O$ is actually the proposed metric we need to show if two orbits have orbital distance 1, then there are incident representatives of each orbit.  The following technical lemma proves this. 
\begin{lemma}\label{dis_dec}
Let $[\Lambda_1],[\Lambda_2]\in \BB_e\GL(V)$.  Assume $d_O(L[\Lambda_1],L[\Lambda_2])=k>0$.  Then there is an $[\Lambda_3]\in\BB_e\GL(V)$ incident to $[\Lambda_2]$ so that $d_O(L[\Lambda_1],L[\Lambda_3])=k-1$.  
\end{lemma}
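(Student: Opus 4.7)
The plan is to reduce to the case $\Lambda_2 = \Lambda^\beta$ (the canonical representative from Proposition~\ref{ndim_reps}) and then build $\Lambda_3$ by a coordinate-wise adjustment of the two-dimensional blocks of $\Lambda^\beta$. By $L$-equivariance I may replace $\Lambda_2$ by an $L$-translate and assume $\Lambda_2 = \Lambda^\beta$, where $\beta \in A^{n_1}$ indexes $L[\Lambda_2]$; let $\alpha \in A^{n_1}$ index $L[\Lambda_1]$, so $d_O(\alpha,\beta) = k$. Set
\[
\epsilon_i = \text{sign}(\alpha_i - \beta_i) \in \{-1,0,+1\}, \qquad \gamma_i = \beta_i + \epsilon_i.
\]
The goal is to produce $\Lambda_3$ incident to $\Lambda^\beta$ whose orbit is $\gamma$, since then $|\alpha_i - \gamma_i| = \max(|\alpha_i - \beta_i| - 1,\, 0)$ for every $i$, giving $d_O(\alpha,\gamma) = k-1$ as required.

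For the construction I would modify the blocks independently using the two-dimensional analysis of Corollary~\ref{2dim_reps}. For each $i \leq n_1$ with $\epsilon_i = +1$, replace $\Lambda^{\beta_i} \subset Y_i$ by $\OO \varpi e_i + \OO(\varpi^{-\beta_i}e_i + f_i)$; for each $i \leq n_1$ with $\epsilon_i = -1$ (so $\alpha_i < \beta_i$, which forces $\beta_i \geq 1$), replace $\Lambda^{\beta_i}$ by $\OO e_i + \OO(\varpi^{1-\beta_i}e_i + \varpi f_i)$; leave all other blocks untouched. A direct computation in each $Y_i$ shows that the modified block sits strictly between $\varpi\Lambda^{\beta_i}$ and $\Lambda^{\beta_i}$ and has $Q_1$-invariant $\OO/\PP^{\beta_i+\epsilon_i}$. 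Taking $\Lambda_3$ to be the direct sum of these (possibly) modified blocks together with the unchanged $\OO f_i$ for $i>n_1$, one gets $\varpi\Lambda_2 \subsetneq \Lambda_3 \subsetneq \Lambda_2$; and since $Q_1$ distributes over the block decomposition, $Q([\Lambda_3]) \cong \bigoplus_{i=1}^{n_1} \OO/\PP^{\gamma_i}$, so by Theorem~\ref{Q_invar} the orbit $L[\Lambda_3]$ corresponds to $\gamma$.

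The main obstacle, and the only step that is not essentially formal, is verifying that $\gamma \in A^{n_1}$, i.e.\ that $(\gamma_i)$ is weakly decreasing. The danger is $\epsilon_{i+1} - \epsilon_i > 0$, which could push $\gamma_{i+1}$ above $\gamma_i$. The worst case $(\epsilon_i,\epsilon_{i+1}) = (-1,+1)$ gives $\alpha_i \leq \beta_i-1$ and $\alpha_{i+1} \geq \beta_{i+1}+1$; combined with $\alpha_i \geq \alpha_{i+1}$ this forces $\beta_i - \beta_{i+1} \geq 2$, exactly enough to absorb the gap $\epsilon_{i+1}-\epsilon_i = 2$. The remaining bad cases $(-1,0)$ and $(0,+1)$ similarly force $\beta_i - \beta_{i+1} \geq 1$ from the joint monotonicity of $\alpha$ and $\beta$. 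In every case $\gamma_i \geq \gamma_{i+1}$, and the proof closes.
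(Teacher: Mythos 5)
Your proof is correct and follows essentially the same path as the paper: reduce by $L$-equivariance to the canonical representative $\Lambda^{\beta}$ and build $\Lambda_3$ by replacing two-dimensional blocks $\Lambda^{\beta_i}$ with the sublattices the paper calls $M^{\beta_i}$ (decrementing the invariant) and $N^{\beta_i}$ (incrementing it), both sandwiched between $\varpi\Lambda^{\beta_i}$ and $\Lambda^{\beta_i}$. The only deviation is cosmetic: you adjust every block where $\alpha_i\neq\beta_i$, whereas the paper adjusts only the blocks where $|\alpha_i-\beta_i|$ attains the maximum value $k$; both choices produce an incident vertex whose orbit lies at $d_O$-distance $k-1$ from $\alpha$. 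Your explicit check that the resulting exponent sequence $\gamma$ stays weakly decreasing (so that it names a genuine element of $A^{n_1}$) is a point the paper passes over silently but which is in fact needed, and your case analysis there is correct.
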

\begin{proof}
Let $[\Lambda_1],[\Lambda_2]$ be as in the statement of the lemma.  Since we are working in $L$-orbits, and $L$ preserves distance in $\BB_e\GL(V)$ we may choose any representatives for $[\Lambda_1]$ and $[\Lambda_2]$ that we like.  In particular if $L[\Lambda_1],L[\Lambda_2]$ are identified with $\alpha,\beta\in A^{n_1}$ respectively, we may take for our representatives $\Lambda^\alpha,\Lambda^\beta$ respectively, as in proposition \ref{ndim_reps}. \\
\\
Recall that if $W_1$ has basis $\{e_i\}_{i=1}^{n_1}$ and $W_2$ has basis $\{f_i\}_{i=1}^{n_2}$ then $\Lambda^\alpha=\bigoplus_{i=1}^{n_1} \Lambda^{\alpha_i}\bigoplus_{i=n_1+1}^{n_2}\OO f_i$ where $\Lambda^{\alpha_i}=<e_i,\varpi^{-\alpha_i},f_i>$.  We now define a series of sublatticies $M^\alpha_i,N^\alpha_i$ which will allow us to define a lattice $\Lambda_3$ with the desired properties.  Let $M^{\alpha_i}=<e_i,\varpi{-\alpha_i+1}e_i+\varpi f_i>$ if $\alpha_i>0$, and $N^{\alpha_i}=<\varpi e_i,\varpi^{-\alpha_i}e_i+f_i>$.  We have that $\varpi\Lambda^{\alpha_i}\subset M^{\alpha_i},N^{\alpha_i}\subset \Lambda^{\alpha_i}$.\\
\\
We now calculate $Q(M^{\alpha_i})$ and $Q(N^{\alpha_i})$ with respect to $E_i=\text{span}(e_i)$ and $F_i=\text{span}(f_i)$.  $P_1(M^{\alpha_i})=<\varpi^{-\alpha_i+1}e_i>$ and $K_1(M^{\alpha_i})=<e_i>$.  So $Q(M^{\alpha_i})$ is represented by $\alpha_i-1\in A^1$.  $P_1(N^{\alpha_i})=<\varpi^{-\alpha_i}e_i>$ and $K_1(n^{\alpha_i})=<\varpi e_i>$. Hence $Q(N^{\alpha_i})$ is represented by $\alpha_i+1\in A^1$.\\
\\
We now construct $\Lambda_3$.  Let $M=\{i|\alpha_i-\beta_i=-k\}$ and $N=\{i|\alpha_i-\beta_i=k\}$ and set $S=\{1,2\ldots,n\}\backslash (M\cup N)$.  Then define $\Lambda_3$ as follows:
\begin{equation}
\Lambda_3=\bigoplus_{i\in S}\Lambda^{\beta_i}\bigoplus_{i\in M} M^{\beta_i} \bigoplus_{i\in N} N^{\beta_i}\bigoplus_{i=n_1+1}^{n_2} \OO f_i
\end{equation}
By construction we have that both $[\Lambda_2]$ and $[\Lambda_3]$ incident and $d_O(L[\Lambda_1],L[\Lambda_3])=k-1$ as desired.
\end{proof}
Together proposition \ref{dis<1} and lemma \ref{dis_dec} give us the following theorem.
\begin{thm}\label{orbit_dis}
Let $[\Lambda_1],[\Lambda_2]\in\BB_e\GL(V)^0$.  Then $d_O(L[\Lambda_1],L[\Lambda_2])$ is the minimal distance between any two representatives of the orbits as measured in the $1$-skeleton of $\BB_e\GL(V)$.
\end{thm}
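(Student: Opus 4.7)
The plan is to sandwich the minimal $1$-skeleton distance between representatives of $L[\Lambda_1]$ and $L[\Lambda_2]$ between two copies of $d_O(L[\Lambda_1],L[\Lambda_2])$. The crucial preliminary observation is that, since $d_O$ is the $\ell^\infty$-distance restricted to $A^{n_1}\subset\Nb^{n_1}$, it is a genuine metric; in particular it satisfies the triangle inequality, which we shall invoke in one direction. Write $d_{\BB}([\Lambda],[\Lambda'])$ for the $1$-skeleton distance in $\BB_e\GL(V)$.

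For the inequality $\min_{[\Lambda_1']\in L[\Lambda_1],\,[\Lambda_2']\in L[\Lambda_2]} d_{\BB}([\Lambda_1'],[\Lambda_2']) \leq d_O(L[\Lambda_1],L[\Lambda_2])$, I would induct on $k=d_O(L[\Lambda_1],L[\Lambda_2])$. The base case $k=0$ is immediate. For the inductive step, Lemma \ref{dis_dec} produces $[\Lambda_3]\in\BB_e\GL(V)$ incident to $[\Lambda_2]$ with $d_O(L[\Lambda_1],L[\Lambda_3])=k-1$; the inductive hypothesis then furnishes representatives $[\Lambda_1']\in L[\Lambda_1]$ and $[\Lambda_3']\in L[\Lambda_3]$ joined by a gallery of length $k-1$. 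Writing $[\Lambda_3']=g[\Lambda_3]$ with $g\in L$, and using that $L\subset\GL(V)$ acts on $\BB_e\GL(V)$ by simplicial automorphisms, the translate $g[\Lambda_2]\in L[\Lambda_2]$ is incident to $[\Lambda_3']$. Concatenation yields a gallery of length $k$ between a pair of representatives, completing the induction.

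For the reverse inequality, take representatives $[\Lambda_1'],[\Lambda_2']$ realizing the minimum together with a minimal gallery $[\Lambda_1']=[M_0],[M_1],\ldots,[M_d]=[\Lambda_2']$. Proposition \ref{dis<1} gives $d_O(L[M_i],L[M_{i+1}])\leq 1$ for each $i$, and the triangle inequality for $d_O$ accumulates these bounds to $d_O(L[\Lambda_1],L[\Lambda_2])\leq d$. Combining both inequalities identifies $d_O$ with the minimal representative distance.

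The main obstacle has in fact already been discharged in Lemma \ref{dis_dec}, whose explicit construction of the modified sublattices $M^{\alpha_i},N^{\alpha_i}$ supplies the nontrivial geometric input. Beyond that, the only technical point to verify in the upper bound is that $L$ preserves incidence of vertex pairs, which is automatic since $L\subset\GL(V)$ and the $\GL(V)$-action on $\BB_e\GL(V)$ is simplicial; this is precisely what allows us to translate the terminal vertex of an inductively constructed gallery so that it meets a prescribed representative of $L[\Lambda_2]$.
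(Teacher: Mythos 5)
Your proof is correct and follows exactly the route the paper intends: the paper's own proof consists of the single sentence that Proposition \ref{dis<1} and Lemma \ref{dis_dec} together yield the theorem, and your write-up simply supplies the omitted bookkeeping (the induction on $k$ with $L$-translation to concatenate paths for the upper bound, and the triangle inequality for the $\ell^\infty$-metric applied along a minimal path for the lower bound).
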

Theorem \ref{orbit_dis} gives a complete combinatorial description of the geometry of the orbit space $L\BB_e\GL(V)^0$.  The following figure is the quotient space for $L\backslash\BB_e\GL(V)$ when $V$ is $4$ dimensional and $n_1=n_2=2$
\begin{equation}
\xymatrix@=0.12cm{
 & & & & & & & & & & & & &  & \\
 &\ar@{.}[ul] & & & & & & & & & & & & &  \ar@{.}[u] \\ 
 & & \bullet\atop{(4,3)} \ar@{-}[ur] \ar@{-}[u] \ar@{-}[ul]  & & & & \bullet\atop{(4,2)}  \ar@{-}[ur] \ar@{-}[u] \ar@{-}[ul]\ar@{-}[llll] & & & & \bullet\atop{(4,1)} \ar@{-}[ur] \ar@{-}[u] \ar@{-}[ul]\ar@{-}[llll] & & & & \bullet\atop{(4,0)} \ar@{-}[u] \ar@{-}[ul]\ar@{-}[llll]\\
& & & &  & & & &  & & & &  & & \\
& & & &  & & & &  & & & &  & & \\
& & & &  & & & &  & & & &  & & \\
& &    & & & & \bullet\atop{(2,2)} \ar@{-}[uuuu] \ar@{-}[uuuullll]\ar@{-}[uuuurrrr] & & & & \bullet\atop{(2,1)} \ar@{-}[uuuu] \ar@{-}[uuuullll] \ar@{-}[llll] \ar@{-}[uuuurrrr] & & & & \bullet\atop{(2,0)} \ar@{-}[uuuu] \ar@{-}[uuuullll] \ar@{-}[llll]  \\
& & & &  & & & &  & & & &  & &  \\
& & & &  & & & &  & & & &  & & \\
& & & &  & & & &  & & & &  & & \\
  & &  & & & &  & & & & \bullet\atop{(1,1)} \ar@{-}[uuuu] \ar@{-}[uuuullll] \ar@{-}[uuuurrrr]& & & & \bullet\atop{(1,0)} \ar@{-}[uuuu] \ar@{-}[uuuullll] \ar@{-}[llll] \\
& & & &  & & & &  & & & &  & & \\
& & & &  & & & &  & & & &  & & \\
& & & &  & & & &  & & & &  & & \\
& &  & & & &  & & & &  & & & & \bullet\atop{(0,0)} \ar@{-}[uuuu] \ar@{-}[uuuullll] \\
}
\end{equation}
\newpage
\subsection{Distance to $\overline{\Ac_{\FF_1\cup \FF_2}}$ in $\BB_e(W\oplus W)$}
There is an important special case of theorem \ref{orbit_dis}.  The orbit for which $Q(\Lambda)=0$ is distinguished.  In this section we give both a description of this orbit, as well as another description of this distance from a given point to this orbit.\\
\\
Recall from section \ref{secBe} that an apartment $\Ac_\FF$ is specified by a frame $\FF$ in $W_1\oplus W_2$.  Denote by $\text{Frame}(V)$ the set of all frames in a vector space $V$.  We will be interested in the following collection of apartments:
\begin{equation}
\AF=\bigcup_{\FF_1\in\text{Frame}(W_1)\atop \FF_2\in \text{Frame}(W_2)}\Ac_{\FF_1\cup \FF_2}
\end{equation}
\begin{prop}
$\AF$ is a subbuilding of $\BB_e\GL(V)$.
\end{prop}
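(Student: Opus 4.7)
The plan is to verify the standard building axioms for $\AF$ endowed with the apartment system $\{\Ac_{\FF_1\cup\FF_2}\}$. Two of the axioms come essentially for free: each $\Ac_{\FF_1\cup\FF_2}$ is a Coxeter complex because it is already an apartment of $\BB_e\GL(V)$, and the axiom producing, for any two apartments sharing a chamber, an isomorphism fixing their intersection is inherited directly from the ambient building. The content therefore reduces to the common-apartment axiom: any two simplices of $\AF$ lie in a single $\Ac_{\FF_1\cup\FF_2}$.

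As a preliminary, I would give an intrinsic description of the vertex set: $[\Lambda]\in\AF^0$ if and only if $\Lambda=(\Lambda\cap W_1)\oplus(\Lambda\cap W_2)$, equivalently $Q([\Lambda])=0$ in the notation of Proposition \ref{theta_map}. The forward direction is immediate from the form of lattices in a split apartment. Conversely, given such a decomposition, picking any frame $\FF_i$ of $W_i$ diagonalizing $\Lambda\cap W_i$ yields a split frame of $V$ in which $\Lambda$ is diagonal. The same argument applied to chains shows that the simplicial structure of $\AF$ coincides with the full subcomplex on $\AF^0$.

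For the common-apartment axiom, it suffices to treat two chambers $C,C'$ of $\AF$, since every simplex is a face of some chamber of its apartment. Writing $C$ as a chain $\varpi\Lambda_0\subsetneq\Lambda_1\subsetneq\cdots\subsetneq\Lambda_{n-1}\subsetneq\Lambda_0$ with every $\Lambda_j$ splitting, intersecting with $W_i$ yields a chain in $W_i$ which, after collapsing consecutive repetitions, represents a facet $F_i$ of $\BB_e\GL(W_i)$. Doing the same for $C'$ and invoking the building axiom in each $\BB_e\GL(W_i)$ produces an apartment $\Ac_{\FF_i}$ containing both $F_i$ and $F_i'$. Then $\FF_1\cup\FF_2$ diagonalizes every lattice of $C$ and of $C'$, so $\Ac_{\FF_1\cup\FF_2}$ is a common split apartment. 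The main obstacle is bookkeeping the interleaved chains: one must check that each strict inclusion $\Lambda_{j+1}\subsetneq\Lambda_j$ in $V$ comes from a strict inclusion in exactly one of $W_1,W_2$, so that the projected chains really do describe facets of $\BB_e\GL(W_i)$. This is immediate from the uniqueness of the decomposition $\Lambda_j=(\Lambda_j\cap W_1)\oplus(\Lambda_j\cap W_2)$.
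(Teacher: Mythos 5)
Your proposal is correct and follows essentially the same route as the paper: reduce to the common-apartment axiom, split each lattice in a chamber's chain as $(\Lambda\cap W_1)\oplus(\Lambda\cap W_2)$, obtain facets in each $\BB_e\GL(W_i)$ after collapsing repetitions, and take the union of the resulting frames. The extra details you supply (the intrinsic characterization of $\AF^0$ and the check that each strict inclusion jumps in exactly one factor) are correct refinements of the same argument.
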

\begin{proof}
Since $\AF$ is a union of apartments from an actual building all that needs to be shown is that any two chambers $C_1,C_2\in \AF$ are in a common apartment.  Let $\Lambda_1\supset \Lambda_2\supset\ldots \supset\Lambda_{n}\supset \varpi\Lambda_1$ be a chain of $\OO$-lattices corresponding to a chamber $C\in \AF$, and $M_1\supset M_2 \supset \ldots \supset M_{n}\supset \varpi M_1$ a chain of lattices corresponding to a chamber $D\in \AF$.  Since each $[\Lambda_i]\in\AF$ we can write  $\Lambda_i=\Lambda_i^1\oplus \Lambda^2_i$ with $[\Lambda_i^j]\in \BB_e(W_j)$.  Similarly for the $M_i$.  The $\{[\Lambda_i^j\}_{i=1}^{n}$,$\{M_i^j\}_{i=1}^{n}$ specify facets $C_j,D_j\in \BB_e(W_j)$  since $\Lambda_1^j\supset \Lambda_i^j\supset \varpi\Lambda_1^j$ (it will be the case that some of the $\Lambda_i^j=\Lambda_{i+1}^j$ but this will not matter), and similarly for the $M_i^j$.  Then there are common apartments $\Ac_j\subset \BB_e\GL(W_j)$ which contain $C_j$ and $D_j$.  Since each $\Ac$ is specified by a frame $\FF_j$ in $W_j$.  Then $\Ac_{\FF_1\cup\FF_2}$, the apartment specified by $\FF_1\cup \FF_2$, contains the chambers $C$ and $D$.
\end{proof}

Now let $[\Lambda]\in \BB_e\GL(V)^0$. We define a function on $\BB_e\GL(V)^0$ as follows:
\begin{equation}
\begin{array}{cll}
d_\Ac:\BB_e(W_1\oplus W_2)&\rightarrow & \Nb\\
$[$\Lambda$]$ & \mapsto & \log_\PP[\text{Ann}(Q(\Lambda))]
\end{array}
\end{equation}
\begin{thm}
Let $[\Lambda] \in \BB_e\GL(V)^0$ then $d_\Ac([\Lambda])=d_O(L[\Lambda],\AF )$.
\end{thm}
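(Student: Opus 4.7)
The plan is to identify $\AF$ with the single $L$-orbit corresponding to $\alpha=0\in A^{n_1}$ under the classification via $Q$, after which both sides of the claimed equality can be read off directly from the elementary divisors of $Q(\Lambda)$.

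First I would establish the key characterization $\AF\cap\BB_e\GL(V)^0=\{[\Lambda]:Q(\Lambda)=0\}$. For the inclusion $\subseteq$, if $[\Lambda]\in\Ac_{\FF_1\cup\FF_2}$ then a representative has the form $\Lambda=\bigoplus_j\OO v_j$ with each $v_j$ lying on a line of $\FF_1\cup\FF_2$, hence in $W_1$ or $W_2$. Thus $\Lambda=K_1(\Lambda)\oplus K_2(\Lambda)$, which forces $P_i(\Lambda)=K_i(\Lambda)$ and so $Q(\Lambda)=0$. For the inclusion $\supseteq$, given $Q(\Lambda)=0$, set $\Lambda'=K_1(\Lambda)\oplus K_2(\Lambda)\subset\Lambda$; a direct check gives $P_i(\Lambda')=K_i(\Lambda)=P_i(\Lambda)$ and $K_i(\Lambda')=K_i(\Lambda)$, so Lemma \ref{Lam_eq} yields $\Lambda=\Lambda'$. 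Choosing frames $\FF_i$ of $W_i$ whose lines pass through an $\OO$-basis of $K_i(\Lambda)$ then exhibits $[\Lambda]\in\Ac_{\FF_1\cup\FF_2}\subset\AF$.

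Given this, Proposition \ref{ndim_reps} lets me identify $L[\Lambda]$ with some $\alpha=(\alpha_1,\ldots,\alpha_{n_1})\in A^{n_1}$ for which $Q(\Lambda)\cong\bigoplus_i\OO/\PP^{\alpha_i}$ and $\alpha_1\geq\cdots\geq\alpha_{n_1}\geq 0$, while by the previous step $\AF$ corresponds to $0\in A^{n_1}$. Therefore
$$
d_O(L[\Lambda],\AF)=d_O(\alpha,0)=\max_i\alpha_i=\alpha_1.
$$
On the other hand $\text{Ann}\bigl(\bigoplus_i\OO/\PP^{\alpha_i}\bigr)=\PP^{\alpha_1}$, so $d_\Ac([\Lambda])=\log_\PP\PP^{\alpha_1}=\alpha_1$, and the theorem follows.

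The main obstacle is the reverse inclusion in the characterization of $\AF$: one must show that vanishing of $Q$ truly forces $\Lambda$ to split as $K_1(\Lambda)\oplus K_2(\Lambda)$ along $V=W_1\oplus W_2$, not merely that $\Lambda$ contains such a splitting. Lemma \ref{Lam_eq} is perfectly tailored to close this gap, so the step is short, but without it one would have to chase elements by hand. Everything afterwards is formal, since both $d_\Ac$ and $d_O(\cdot,\AF)$ depend only on the largest elementary divisor of $Q(\Lambda)$, and Theorem \ref{orbit_dis} guarantees that the resulting orbital distance agrees with the minimum $1$-skeleton distance from $[\Lambda]$ to $\AF$ (which is meaningful precisely because $\AF$ is a single $L$-orbit).
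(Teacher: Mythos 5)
Your proof is correct and follows the same line as the paper's, which simply invokes Theorem \ref{orbit_dis} together with the (unproved) assertion that $\AF$ corresponds to $(0)\in A^{n_1}$. You carefully supply the proof of that assertion via Lemma \ref{Lam_eq}, which is a welcome addition since the paper leaves it implicit.
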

\begin{proof}
This follows from theorem \ref{orbit_dis}, and the fact that $\AF$ is associated to $(0)\in A^n$.
\end{proof}
In the special case when $n=1$ $\AF$ is just an apartment of $\BB_e\GL(V)^0$.   Then $d_\Ac$ is just measuring the distance of a given point to a fixed apartment.  This suggests that one may be able to find the distance of a vertex to a fixed apartment by studying the action of a maximal split torus on the building.

\bibliography{bib}{}
\bibliographystyle{plain}
\end{document}